\numberwithin{equation}{section}
\newtheorem{thm}{Theorem}[section]
\newtheorem{lem}[thm]{Lemma}
\newtheorem{prop}[thm]{Proposition}
 \theoremstyle{definition}
\theoremstyle{remark}
\newcommand{\tref}[1]{Theorem~\ref{#1}}
\newcommand{\cref}[1]{Corollary~\ref{#1}}
\newcommand{\R}{\mathbb{R}}
\newcommand{\red}[1]{{\color{red}}}
\begin{document}
\pagebreak


\title{Hilbert space factor of metric spaces}







\author{Thomas Foertsch, Alexander Lytchak and Elefterios Soultanis}

\subjclass{53C20, 51F99}

\keywords{de Rham decomposition, line factor, Cartesian  product}

\begin{abstract}
We prove that any complete metric space has a unique 
decomposition as a direct product of a possibly finite or zero-dimensional Hilbert space and a space that does not split off lines.	
\end{abstract}

\maketitle
\renewcommand{\theequation}{\arabic{section}.\arabic{equation}}
\pagenumbering{arabic}



\section{Introduction}
A \emph{line} $\ell$ in metric space $X$ is a subset isometric to the real line  $\R$.
We call a line $\ell \subset X$ a \emph{splitting line}  in $X$  if there exists a metric space $Y$ and an isometry 
$\Phi: X\to \R \times Y$ onto the direct product, such that the composition of $\Phi$ and the projection $P^{Y}$ to $Y$
sends $\ell$ to a point.

In this note we verify the following result:
\begin{thm} \label{thm: main}
	Let $X$ be a complete metric space. Then there exists a unique decomposition of $X$ as a direct product $X=Y\times H$, where the metric space
	$Y$ does not contain splitting lines  and $H$ is a (possibly finite-  or zero-dimensional) Hilbert space.
	
	For any point $x\in X$, the maximal Hilbert space factor $H_x$ (with respect to inclusion) containing $x$ coincides with the union of all splitting lines $\ell \subset X$ containing $x$.
%
\end{thm}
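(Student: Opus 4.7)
The plan is to build the Hilbert factor through a fixed basepoint $x_0 \in X$ from the splitting lines passing through $x_0$, to derive the global decomposition $X = Y \times H$ from it, and then to extract uniqueness from this construction.

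The technical core is a commutation and composition lemma for pairs of splitting lines. Given splitting lines $\ell_1, \ell_2 \subset X$ through $x_0$, with associated isometric splittings $\Phi_i \colon X \to \R \times Y_i$ and translation $\R$-actions $\phi_i^t$ on $X$, I would try to show that $\phi_1^s$ and $\phi_2^t$ commute and that the joint orbit of $x_0$ under $\phi_1, \phi_2$ is a flat $E \subset X$ isometric to $\R^2$ (or $\R$, if $\ell_1 = \ell_2$) which itself splits as a direct factor $X = E \times Z$. Establishing this requires a rigidity argument for lines in products: the image of $\ell_2$ under $\Phi_1$ is a line in $\R \times Y_1$, which must factor as an affine function into the $\R$ coordinate plus a constant-speed line into $Y_1$, and the hypothesis that $\ell_2$ is itself a splitting line of $X$ is then used to extract a second honest $\R$ factor. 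Induction on this two-line case promotes it to finitely many splitting lines: any finite collection of splitting lines through $x_0$ lies in a finite-dimensional Euclidean direct factor of $X$.

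I would then pass to the limit by Zorn's lemma applied to the poset of Euclidean direct factors of $X$ through $x_0$, ordered by inclusion, producing a maximal element $H \subset X$. Completeness of $X$ transfers to $H$, so $H$ is a genuine Hilbert space, and $X$ factors as $X = Y \times H$. Write $\mathcal{L}_{x_0}$ for the set of splitting lines of $X$ through $x_0$. By the commutation lemma and maximality, every $\ell \in \mathcal{L}_{x_0}$ lies in $H$, so $H$ coincides with $\bigcup \mathcal{L}_{x_0}$; conversely, $Y$ admits no splitting line, for any such would pull back to a splitting line of $X$ independent of $H$, contradicting maximality. Uniqueness is now immediate: in any splitting $X = Y_i \times H_i$ with $Y_i$ line-free, the Hilbert factor through $x_0$ must both contain and be contained in $\bigcup \mathcal{L}_{x_0}$, which depends only on $X$ and $x_0$; the complementary $Y_i$ are then isometric quotients.

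The main obstacle is exactly the commutation and composition lemma above. The substance of the theorem is the isometric-rigidity statement that two intersecting splitting lines of $X$ necessarily span a flat Euclidean plane that itself splits off; ruling out the ``skew'' configurations where $\Phi_1(\ell_2)$ genuinely involves both factors is where the real work happens. Once this two-line case is in hand, the subsequent Zorn, maximality, and uniqueness steps are largely formal.
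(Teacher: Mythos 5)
Your architecture matches the paper's --- build the Hilbert factor at a basepoint out of splitting lines, take a maximal such factor by Zorn's lemma, and read off both existence and uniqueness from the characterization of $H_x$ as the union of splitting lines through $x$ --- but the proposal leaves the two genuinely hard steps unproved, and one of them is needed in a stronger form than you state. The rigidity lemma is the first gap: you correctly identify that the substance is ruling out a splitting line $\ell_2$ sitting ``skew'' inside the splitting $X=\R\times Y_1$ induced by $\ell_1$, but you only name this as the main obstacle rather than proving it. Moreover, the two-line version you formulate is not enough for your own last step. The assertion that ``by the commutation lemma and maximality, every $\ell\in\mathcal L_{x_0}$ lies in $H$'' requires the lemma with the second factor replaced by the cofactor of the (possibly infinite-dimensional) maximal $H$, not by another line: knowing that $\ell$ together with each finite-dimensional Euclidean piece of $H$ spans a Euclidean factor does not contradict maximality of $H$ among Hilbert-space factors. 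This is why the paper proves \pref{prop: mainprop} for an \emph{arbitrary} factor subset $Y\in\mathcal F_x$, showing $\pi_Y(\ell)$ is a point or a splitting line of $Y$; that generality is exactly what closes your final step.

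The second gap is the Zorn argument. If your poset consists of finite-dimensional Euclidean factors, a chain of strictly growing dimension has no upper bound in the poset and Zorn does not apply; if it consists of all Hilbert-space factors, you must prove that the closed union of a chain of factor subsets is again a \emph{factor subset} of $X$. You assert ``$X$ factors as $X=Y\times H$'' without argument, but this is the content of \lref{lem: zorn}, whose proof shows that the nets of projections onto the $A^{\alpha}$ and onto their cofactors are Cauchy, converge by completeness, and that the limit map $(\pi_H,\pi_{B^{\infty}})$ is an isometry onto $H\times B^{\infty}$. Completeness of $X$ enters there in an essential way, not merely to make $H$ a complete inner-product space. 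Until both of these steps are supplied, the proposal is a correct outline rather than a proof.
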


As a consequence,  the isometry group $Iso(X)$ of $X$ is canonically isomorphic to
$Iso (Y)\times Iso(H)$, cf. \cite[Section 2.5]{FL}. 

 Theorem  \ref{thm: main} contains no assumptions on the space $X$ other than completeness. Nevertheless,  the proof  is not completely trivial. The theorem is  related to a problem
 formulated in a special form by Ulam, \cite{Ulam}, asking about the uniqueness of decompositions of a metric space as a product of some \emph{irreducible factors}.  For non-complete spaces there exist surprisingly simple counterexamples to uniqueness, \cite{four}, \cite{intui}. For  compact spaces this problem is widely open.

 Under additional geometric assumptions, Theorem \ref{thm: main} is known.
   For geodesic metric spaces satisfying a finite-dimensionality assumption, a stronger  uniqueness of  product decompositions was proved in \cite{FL}, generalizing the classical de Rham decomposition theorem in Riemannian geometry
 \cite{rham}, \cite{eh}.  
 
  If the space $X$ is a CAT(0) space, then Theorem \ref{thm: main} is contained in \cite[Theorem 6.15]{BH}. Moreover, in this case  the Hilbert space factor $H$ is identified with the group of all \emph{Clifford translations} of $X$. 
 More generally, for CAT$(\kappa)$ spaces $X$ with any 
$\kappa$, the Hilbert space factor is closely related to the space of continuous affine functions on $X$, \cite{LS}.

 If $X$ is non-negatively curved in the sense of Alexandrov, then any line in $X$ is a splitting line,
   by  Toponogov's theorem \cite{Mits}, \cite[Section 16G]{AKP}. 
  In this   case, Theorem \ref{thm: main} states that the union $H_x$  of all lines through $x\in X$ is the maximal  Hilbert space  factor of $X$. 
 
 Due to the splitting theorem proven by Gigli in \cite{Gigli, Gigli1}, the last statement applies without changes to RCD$(0, N)$ spaces, $N<\infty$.

 Theorem \ref{thm: main} was obtained many years ago during the work on \cite{FL}. We were not aware of any consequence until recently, when the result appeared to play some role in the study of universal infinitesimal Hilbertianity \cite{Gigli2}.  See also \cite[Theorem A]{Che} for another recent appearance of the Hilbert space factor.
 The current paper is published jointly with the third named author who obtained an independent proof of a  version of Theorem \ref{thm: main}.  
 
 {\bf Acknowledgements.}
We would like to thank  Nicola Gigli for helpful discussions.  The paper was finalized  during the stay of Alexander Lytchak and Elefterios Soultanis at the HIM  in Bonn as part of 
 the special trimester on Metric Analysis  funded by the  DFG under Germany's Excellence Strategy – EXC-2047/1 – 390685813. Elefterios Soultanis was supported by the Research Council of Finland grant 355122.


\section{Notation}
\label{subsec-notations}
By $d$ we will denote distances in metric spaces without reference
to the space. 
For metric spaces $Y,\bar Y$, the product $Y\times \bar Y$ will always denote the direct (Cartesian) product of the metric spaces. For a direct product $X=Y\times \bar Y$ we denote
by $P^Y:X\to Y$ and $P^{\bar Y} :X\to \bar Y$  the canonical
projections onto the factors.

  For $x\in X$ we denote by $Y_x:=(P^{\bar Y})^{-1} (P^{\bar Y} (x))$ the $Y$-fiber through $x$. The restriction
$P^Y :Y_x \to Y$ is an isometry. We will  identify $Y_x$ with
$Y$ via this isometry. Using this identification, the projection $P^Y:X\to Y$ is identified with the closest-point projection $\pi_{Y_x}$ which sends points $p\in X$ to the unique point $\pi_{Y_x}(p)\in Y_x\simeq Y$ minimizing $d(p,Y_x)$. The isometry $X\to Y\times \bar Y$ is then identified with the map 
$$(\pi_Y,\pi_{\bar Y}) :X\to Y \times \bar Y \subset X\times X\;.$$
The map is surjective and satisfies for all $x_1,x_2\in X$:
$$d^2(x_1,x_2)= d^2 (\pi_Y(x_1),\pi _Y(x_2)) +d^2 (\pi _{\bar Y}(x_1),\pi _{\bar Y}(x_2))\, .$$

\section{Main observation}
\subsection{Factor subsets}
Fix  a point $x$ in a metric space $X$.   We call a subset $A$ containing $x$ a  \emph{factor subset} if there  exists an isometry  $I:X\to Y\times \bar Y$, such that $A$ coincides with $Y_x$ under this isometry.     

Any factor subset is closed.  If $x\in A$ is a factor subset then the closest-point projection  $\pi_A:X \to A$ is well-defined, the preimage $B:=\pi_A^{-1} (x)$ is another factor subset  of $X$ and  
$$(\pi_A,\pi_B):X\to A\times B$$
is an isometry.  We call $B$ the \emph{cofactor} of $A$.  

We denote by $\mathcal F_x$ the set of all factor subsets of $X$ containing $x$.  The set $\mathcal F_x$ is partially ordered by inclusion.  The map $\perp: \mathcal F_x \to \mathcal F_x$ which sends a factor subset $A\in \mathcal F_x$ to its complementary factor subset  is an order-reversing involution of $\mathcal F_x$.

\subsection{Intersections and projections of factor subsets}

The question whether the intersection of two factor subsets is again a factor subset is closely related to the  uniqueness of product decompositions, cf.    \cite[Lemma 5.1]{FL}. The following observation provides a sufficient criterion for an affirmative answer to this question:

\begin{lem} \label{lem: factorsubset}
 Let $X$ be a metric space and $x\in X$. Let $X=Z\times \bar Z$ be a decomposition and let $Y\in \mathcal F_x$ be some factor subset of $X$.
For $p\in Y$, we set $G_p:=\pi_{Y}  (\bar Z_p)$ and $F_p:=Y\cap Z_p$.
Assume that,  for all $p,q\in Y$:
\begin{enumerate}
\item The assumption $q\in G_p$ implies $G_q=G_p$.
\item  $G_p \cap F_q$ contains   exactly one point. 
\end{enumerate}
Then $F_x$ and $G_x$ are factor subsets of $Y$  and  $Y =F_x\times G_x$. 
\end{lem}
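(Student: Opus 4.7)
The plan is to construct an explicit bijective isometry $\Phi\colon Y\to F_x\times G_x$ given by $\Phi(p):=(f(p), g(p))$, where $f(p)\in F_x\cap G_p$ and $g(p)\in G_x\cap F_p$ are the unique points provided by (2). A candidate inverse is $\Psi\colon F_x\times G_x\to Y$, $\Psi(f,g):=G_f\cap F_g$, again a single point by (2). That $\Phi$ and $\Psi$ are mutually inverse follows from (1) (which gives $G_{f(p)}=G_p$ because $f(p)\in G_p$) together with the tautology $F_{g(p)}=F_p$ (since $g(p)\in F_p\subset Z_p$ forces $Z_{g(p)}=Z_p$): then $\Psi(\Phi(p))=G_p\cap F_p$, which contains $p$ and, by (2), has a single element.

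The heart of the argument is to show that for every $p,q\in Y$,
\[ d(p,q)^2=d(f(p),f(q))^2+d(g(p),g(q))^2. \]
My first step is the Pythagorean decomposition $d(p,q)^2=d(p,r)^2+d(r,q)^2$ through the corner $r:=G_p\cap F_q$. To see it, pick any $u\in\bar Z_p$ with $\pi_Y(u)=r$, which exists because $r\in G_p=\pi_Y(\bar Z_p)$. Since $\pi_Y$ is the projection to a factor subset and $q\in Y$, the standard factor-projection Pythagoras yields $d(u,q)^2=d(u,r)^2+d(r,q)^2$. Expanding all three distances through the $Z\times\bar Z$ splitting of $X$ and using $z_u=z_p$ and $\bar z_r=\bar z_q$, a single cancellation reduces this to the pure-$Z$ identity $d_Z(z_p,z_q)^2=d_Z(z_p,z_r)^2+d_Z(z_r,z_q)^2$; adding $d_{\bar Z}(\bar z_p,\bar z_q)^2$ to both sides recovers the Pythagoras in $Y$.

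Next, I would establish a rectangle identity linking the corner distances to the target factor distances. Introduce the second corner $s:=G_q\cap F_p$. Applying the Pythagoras just proved to $(p,q)$ (with both intermediates $r$ and $s$) and to $(r,s)$ and $(s,r)$ (whose intermediates are forced to be $p$ and $q$ respectively, because (1) yields $G_r=G_p$ and $G_s=G_q$, while $F_s=F_p$ and $F_r=F_q$ are automatic), I obtain two algebraic identities among the four side lengths of the rectangle $p,r,q,s$; solving them forces $d(p,r)=d(s,q)$ and $d(r,q)=d(p,s)$. Specialising this rectangle identity to $(p,g(q))$ --- whose corner is again $r$ (since $F_{g(q)}=F_q$) and whose opposite corner is $g(p)$ (since $G_{g(q)}=G_x$ by (1), giving $G_{g(q)}\cap F_p=g(p)$) --- yields $d(p,r)=d(g(p),g(q))$. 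The symmetric specialisation to $(f(p),q)$ gives $d(r,q)=d(f(p),f(q))$. Combined with the Pythagoras of the previous paragraph, this exhibits $\Phi$ as an isometry, so $F_x$ and $G_x$ are mutually complementary factor subsets of $Y$ with $Y=F_x\times G_x$.

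The main obstacle I anticipate is the Pythagorean identity through $r$ in the second paragraph: a direct expansion of $d(p,r)^2+d(r,q)^2-d(p,q)^2$ in the $Z\times\bar Z$ splitting reduces to an opaque claim of Pythagoras for $z_p,z_r,z_q$ in $Z$ with no obvious reason to hold. The rescue is to reinstate the orthogonality of $\pi_Y$ by introducing the auxiliary point $u\in\bar Z_p$ projecting to $r$; only then does the needed $Z$-Pythagoras drop out of the product geometry of $X$. After that, the rectangle argument is essentially formal algebra.
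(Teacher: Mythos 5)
Your proof is correct, and its computational heart coincides with the paper's: the Pythagorean identity $d^2(p,q)=d^2(p,r)+d^2(r,q)$ through the corner $r=G_p\cap F_q$, proved by lifting $r$ to an auxiliary point of a $\bar Z$-fiber so that the factor-projection orthogonality of $\pi_Y$ and of the $Z\times\bar Z$ splitting can be played against each other. (The paper takes $\bar z\in \bar Z_r$ with $\pi_Y(\bar z)=p$, which needs hypothesis~(1); your choice $u\in\bar Z_p$ with $\pi_Y(u)=r$ uses only the definition of $G_p$ -- both work.) Where you diverge is in what happens after this identity: the paper defines the transition maps $I_{p_1,p_2}\colon F_{p_1}\to F_{p_2}$, checks the cocycle relations, and then invokes the criterion of \cite[Subsection 2.2]{FL} to conclude that the $F_p$ are fibers of a product decomposition. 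You instead build the isometry $\Phi=(f,g)\colon Y\to F_x\times G_x$ by hand and prove it is a metric isomorphism via the rectangle identity $d(p,r)=d(s,q)$, $d(r,q)=d(p,s)$, obtained by running the corner Pythagoras around all four vertices and solving the resulting linear system. This makes your argument fully self-contained where the paper outsources the ``Pythagorean fibration implies product'' step to an external reference; the cost is a somewhat longer formal verification (and you should still record the one-line observation that $\Phi$ carries $F_x$ and $G_x$ onto the fibers $F_x\times\{x\}$ and $\{x\}\times G_x$, so that they are indeed factor subsets). Both routes are sound.
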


\begin{proof}
The subsets $F_p, p\in Y$ constitute a disjoint decomposition of $Y$. Due to (1),   the subsets $G_p,p\in Y$ constitute a disjoint decomposition of $Y$ as well.
By the  second assumption, we obtain for all $p_1,p_2\in Y$   a canonical map $I_{p_1,p_2}:F_{p_1}\to F_{p_2}$, which sends $q\in F_{p_1}$ to 
$G_q\cap F_{p_2}$.

By definition, $I_{p,p}$ is the identity, for all $p\in Y_x$. By the first assumption,  $I_{p_2,p_3} \circ I_{p_1,p_2}  =I_{p_1,p_3}$, for
all $p_1,p_2,p_3 \in Y_x$. 

 Due to  \cite[Subsection 2.2]{FL}, the subsets $F_p$ are factor subsets of $Y$ if, for all $p_1,p_2\in Y$, we have
\begin{equation} \label{eq: foer}
d^2(p_1,p_2)=d^2(p_1,I_{p_1,p_2} (p_1)) +d^2(I_{p_1,p_2} (p_1),p_2)\,.
\end{equation}
If this holds true, then  $G_x$ is the cofactor subset of $F_x$ in $Y$ through the point $x$.  It remains to verify equality  \eqref{eq: foer}, essentially contained in  \cite[Lemma 2.1]{FL}; we present the computation here for convenience.

Set $q=I_{p_1,p_2} (p_1)$.    By assumption, $G_q=G_{p_1}$, hence 
we find some $\bar z \in \bar Z _q$, such that $p_1=\pi _Y (\bar z)$.  Then
$$d^2(p_2,\bar z)-d^2(q,p_2)=d^2(q,\bar z)=d^2(\bar z,p_1)+d^2(p_1,q)\;,$$
$$d^2(p_2,p_1)+d^2(p_1,\bar z)=d^2(p_2,\bar z)\;.$$
Taking the sum of this two equalities we deduce 
$$d^2(p_1,p_2)-d^2(q,p_2)=d^2(p_1,q)\,,$$
hence \eqref{eq: foer}.  This finishes the proof.
\end{proof}

\subsection{Splitting lines}
We  can verify the assumptions  of Lemma \ref{lem: factorsubset} in a special case:

\begin{prop} \label{prop: mainprop}
Let $x$ be a point in a metric space $X$.  Let $Y\in \mathcal F_x$ be a factor subset and let
$\ell \in \mathcal F_x$ be a splitting line.  Then $\pi_Y(\ell)$ is either a point or a splitting line in $Y$.
\end{prop}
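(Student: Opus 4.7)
The plan is to apply Lemma \ref{lem: factorsubset} to $X$ with the decomposition $X=\bar\ell\times\ell$ (so $Z=\bar\ell$, $\bar Z=\ell$) and factor subset $Y$. With this choice $G_x=\pi_Y(\ell)$ and $F_x=Y\cap\bar\ell$, so if both hypotheses of the lemma can be verified we obtain $Y=F_x\times G_x$, exhibiting $\pi_Y(\ell)$ as a factor subset of $Y$; a short side argument then shows this factor is either a point or isometric to $\R$, hence a splitting line.

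Parametrize $\ell$ by an isometric $\gamma:\R\to X$ with $\gamma(0)=x$, let $T_t:X\to X$ be the translations arising from $X=\ell\times\bar\ell$, and write $s:X\to\R$ for the $\ell$-coordinate. The technical heart of the proof is the identity
\[
\pi_Y(T_t q)=\pi_Y(T_t\pi_Y(q))\qquad\text{for all }q\in X,\ t\in\R.
\]
Its proof is short: the product formula in $X=\R\times\bar\ell$ gives $d^2(y,T_tq)=d^2(y,q)+t^2+2t(s(q)-s(y))$ for any $y\in Y$ and $q\in X$, and the product formula in $X=Y\times\bar Y$ shows that for $y\in Y$, $d^2(y,q)$ differs from $d^2(y,\pi_Y(q))$ by a constant independent of $y$. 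Hence minimizing over $y\in Y$ gives the same argmin whether one uses $q$ or $\pi_Y(q)$.

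Given the identity, $R_t:=\pi_Y\circ T_t|_Y$ is a $1$-parameter group of isometries of $Y$: the group law $R_tR_s=R_{t+s}$ is immediate, and sandwiching $R_t$ between its $1$-Lipschitz inverse $R_{-t}$ forces $R_t$ to be an isometry. Hypothesis (1) of Lemma \ref{lem: factorsubset} is then automatic, since $G_p=\pi_Y(\ell_p)$ is exactly the $R$-orbit of $p$. A subadditivity plus Cauchy--Schwarz argument applied to $g(t):=d(R_tx,x)$ and $h(t):=d(b(t),\bar x)$, which satisfy $g(t)^2+h(t)^2=t^2$, forces $g(t)=c|t|$ for some $c\in[0,1]$; if $c=0$ then $\pi_Y(\ell)=\{x\}$, while if $c>0$ a further use of the identity (applied to $q=p\in Y$) yields $s(R_tp)-s(p)=c^2t$ along every orbit, so $s|_Y$ is strictly monotonic along each $G_p$. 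This gives hypothesis (2): the level set $F_q=\{y\in Y:s(y)=s(q)\}$ meets $G_p$ in the unique point $R_{t^*}p$ with $t^*=(s(q)-s(p))/c^2$. Lemma \ref{lem: factorsubset} then delivers $Y=F_x\times G_x$ with $G_x=\pi_Y(\ell)$ isometric to $\R$, hence a splitting line.

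The main obstacle is the identity $\pi_Y(T_tq)=\pi_Y(T_t\pi_Y(q))$, i.e.\ the fact that the Clifford translations $T_t$ descend through $\pi_Y$ to well-defined isometries of $Y$; this is the metric bridge between the two splittings $X=\ell\times\bar\ell$ and $X=Y\times\bar Y$. Everything else is bookkeeping: a direct application of Lemma \ref{lem: factorsubset} together with the standard rigidity argument for the constraint $g^2+h^2=t^2$.
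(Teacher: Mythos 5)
Your proposal follows the paper's skeleton --- reduce to Lemma~\ref{lem: factorsubset} with $G_p=\pi_Y(\ell_p)$ and $F_p$ the intersection of $Y$ with the cofactor of $\ell_p$, then verify hypotheses (1) and (2) --- but the way you verify them is genuinely different and, for hypothesis (1), arguably cleaner. The paper proves (1) by projecting the Euclidean strip $K\times\ell_p$ into $Y$ and invoking a rigidity result for projections of flats, \cite[Proposition 1.4]{FL}, to see that $G_p$ and $G_q$ are parallel lines in a normed space; you instead prove the identity $\pi_Y(T_tq)=\pi_Y(T_t\pi_Y(q))$ directly from the two quadratic distance formulas (correctly: the functions $y\mapsto d^2(y,T_tq)$ and $y\mapsto d^2(y,T_t\pi_Y(q))$ on $Y$ differ by a constant, so their minimizers agree), conclude that the translations of $\ell$ descend to a one-parameter isometry group $R_t$ of $Y$, and obtain (1) for free because the sets $G_p$ are the $R$-orbits. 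Your subadditivity-plus-Minkowski argument also replaces the paper's citation of \cite[Section 2.4]{FL} for the constant-velocity statement, and your identification of $F_q$ as a level set of the $\ell$-coordinate $s$ is correct. What each approach buys: the paper leans on external rigidity results from \cite{FL}, while your route is self-contained and exposes the underlying reason for (1), namely that Clifford translations of the $\ell$-factor act by isometries on $Y$.

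There is one step you assert without justification: that $s(R_tp)-s(p)=c^2t$ holds with the \emph{same} constant $c>0$ for \emph{every} $p\in Y$. The computation you indicate (comparing $d^2(R_tp,T_tp)$ via the two product formulas) in fact yields $s(R_tp)-s(p)=c_p^2t$, where $c_p$ is the velocity of the orbit through $p$, and a priori $c_p$ could vanish for some $p$ even though $c=c_x>0$; in that case $G_p$ would be a singleton and hypothesis (2) would fail. The paper rules this out with a boundedness argument. Fortunately the repair is one line with tools you already have: since $R_t$ is an isometry of $Y$,
\[
c_p\,t=d(R_tp,p)\ \ge\ d(R_tx,x)-d(R_tx,R_tp)-d(x,p)\ =\ ct-2d(x,p)\qquad(t>0),
\]
so $c_p\ge c$, and by symmetry $c_p=c$. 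With that sentence added, your proof is complete.
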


\begin{proof}
Denote by $\bar Y \in \mathcal F_x$ the cofactor of $Y$. 
 For $p\in Y$, denote  by $\ell_p$ the splitting line through $p$ determined by $\ell$ and by 
$Z_p$ the cofactor subset of $\ell _p$ through $p$.   As in Lemma \ref{lem: factorsubset},
we set $G_p:= \pi _{Y} (\ell _p)$ and $F_p:=Z_p \cap Y$. For any $p$, parametrize $\ell _p$ by an isometry $\gamma _p:\mathbb R\to \ell _p$, such that $\pi_{\ell} \circ \gamma_p $ is  a translation.
Then $\pi _Y \circ \gamma _p$ and $\pi _{\bar Y} \circ \gamma _p$ are geodesics parametrized with constant velocities $\alpha=\alpha (p)$ and $ \beta=\beta (p)$ such that $\alpha ^2+\beta ^2=1$,  \cite[Section 2.4]{FL}.

 For any $p,q\in Y$, the value
$$\ d^2(\gamma _p (t), \gamma _q (t)) =d^2 (\pi _{ Y}( \gamma _p(t)), \pi_{ Y} (\gamma _q(t)))+  d^2 (\pi _{\bar Y}( \gamma _p(t)), \pi_{\bar Y} (\gamma _q(t)))$$
is independent of $t$.
Hence, the first summand    $d^2(\pi_Y\circ  \gamma _p (t), \pi_y \circ \gamma _q (t))$   is bounded.  
 Therefore, the geodesics $\pi_Y\circ \gamma _p$ and $\pi _Y \circ \gamma _q$ have the same velocity  $\alpha (p)=\alpha (q)$. Hence, $\alpha$ is independent of $p$.

The projection $G_x=\pi_Y(\ell_x)$ is a point if and only if $\alpha (x)=0$.   From now on, let $G_x$ not be a single point. Then, for all $p\in Y$,  the subset $G_p$ is a line parametrized by
$\pi_Y\circ \gamma _p$ with constant velocity $\alpha\neq 0$.

In order to prove that $G_x$ is a splitting line in $Y$, we only need to verify properties (1) and (2) from Lemma \ref{lem: factorsubset}.

Let $p,q\in Y$ be such that $q\in G_p$.  In order to verify  that $G_p$ and $G_q$ coincide, we 
 consider the image $K=\pi _{Z_p} (G_p)$.  This is a geodesic (possibly a point) containing $\pi_{Z_p} (p)$ and $\pi_{Z_p} (q)$.
Hence 
$$K\times \ell _p\subset Z_p \times \ell _p =X$$ is a Euclidean plane (possibly a line), which contains $\ell_p$ and $\ell _q$ as parallel lines.   Applying \cite[Proposition 1.4]{FL}, we deduce that 
$\pi_Y(K)$ is isometric to a linearly convex subset of a normed vector space, such that 
$\pi_Y:K\to \pi_Y(K)$ is an affine map with respect to this linear structure (in fact, one can apply the more special \cite[Proposition 1]{fs}, to see that $\pi_Y(K)$ is  a normed space).  Hence, $\ell_p$ and $\ell_q$ are mapped by $\pi_Y$ to parallel lines in the subset  $\pi_Y(K)$ of a normed vector space.  Since these images $G_p$ and $G_q$ intersect in $q$, these lines have to coincide.
This proves (1).

 The statement that $G_p$ and $F_q$ intersect in exactly one point is equivalent to the statement that $\pi_{\ell} :G_p \to \ell$ is a bijection.  Then the unique intersection point will be the only point $o\in G_p$, for which $\pi _{\ell}(o)= \pi_{\ell} (q)$.

 Applying  \cite[Section 2.4]{FL} again, we see that the restriction of the projection $\pi_{\ell}$ onto the line $G_p$ is either constant or a bijective map onto a line. Since the only line contained in $\ell$ is $\ell$ itself, we deduce that either $\pi_{\ell}:G_p\to \ell$ is a bijection or constant. However, if $\pi_{\ell} (G_p)$ is a point, then $G_p$ is contained in $Z_p$. 
But then $p$ is the closest point on $G_p$ to any point on $\ell _p$.   Since $G_p$ is defined as the  closest-point projection of $\ell_p$ onto $Y$, this would imply $G_p=\{p\}$, in contradiction to our assumption that $G_x$ and hence $G_p$ are not singletons.  This  finishes the proof of    (2) and  of the Lemma.
\end{proof}

\section{Zorn's Lemma}
Consider the subset $\mathcal F^0_x$ consisting of all factor subsets in $\mathcal F_x$ which are isometric to  Hilbert spaces.        We verify:
\begin{lem} \label{lem: zorn}
If $X$ is complete, there exists a maximal element in $\mathcal F^0_x$.
\end{lem}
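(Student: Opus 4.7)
The plan is to invoke Zorn's Lemma: take an arbitrary chain $\{H_\alpha\}_{\alpha\in A}$ in $\mathcal F^0_x$, form $H := \overline{\bigcup_\alpha H_\alpha}\subset X$, and show $H \in \mathcal F^0_x$. This splits naturally into two tasks: showing $H$ is a factor subset of $X$, and showing that it is isometric to a Hilbert space.

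For $\alpha \le \beta$, the uniqueness machinery for factor subsets (cf.\ \cite[Subsection 2.2]{FL}) applied to the nested pair $H_\alpha \subset H_\beta$ produces a splitting $H_\beta = H_\alpha \times C_{\alpha\beta}$ and dually $\bar H_\alpha = C_{\alpha\beta}\times \bar H_\beta$; in particular the cofactors are nested in the reverse direction, $\bar H_\beta \subset \bar H_\alpha$, and for every $p\in X$ a Pythagorean computation in these coordinates gives
\[
d(\pi_{H_\alpha}(p), \pi_{H_\beta}(p))^2 \;=\; d(p,H_\alpha)^2 - d(p,H_\beta)^2 \;=\; d(\pi_{\bar H_\alpha}(p), \pi_{\bar H_\beta}(p))^2.
\]
Since $\alpha \mapsto d(p,H_\alpha)$ is monotone non-increasing and bounded below, both sides tend to $0$ along the chain, so $\{\pi_{H_\alpha}(p)\}$ and $\{\pi_{\bar H_\alpha}(p)\}$ are Cauchy nets in the complete space $X$; their limits will play the roles of $\pi_H(p)\in H$ and $\pi_B(p)\in B:=\bigcap_\alpha \bar H_\alpha$ (membership of $\pi_B(p)$ in every $\bar H_\gamma$ follows from $\pi_{H_\gamma}(\pi_{\bar H_\alpha}(p)) = x$ for $\alpha\ge\gamma$ together with continuity).

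Next I would pass the product identity $d(p,q)^2=d(\pi_{H_\alpha}(p),\pi_{H_\alpha}(q))^2+d(\pi_{\bar H_\alpha}(p),\pi_{\bar H_\alpha}(q))^2$ to the limit, yielding the analogous identity for $(\pi_H,\pi_B)$; surjectivity of $(\pi_H,\pi_B)\colon X\to H\times B$ would be verified by showing that, for $(h,b)\in H\times B$, the net $p_\alpha:=(\pi_{H_\alpha}(h),b)\in H_\alpha\times \bar H_\alpha\cong X$ is Cauchy (with $d(p_\alpha,p_\beta)=d(\pi_{H_\alpha}(h),\pi_{H_\beta}(h))$) and its limit realizes $(h,b)$. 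Finally, for $\alpha\le\beta$ the inclusion $H_\alpha\hookrightarrow H_\beta$ is a basepoint-preserving isometry between Hilbert spaces, hence linear by the polarization identity, so $\bigcup_\alpha H_\alpha$ acquires the structure of a pre-Hilbert space; its completion, which coincides with $H$ by completeness of $X$, is a Hilbert space.

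The main technical obstacle, as I see it, is coordinating the Cauchy-net arguments along a possibly uncountable chain---in particular, correctly identifying the cofactor of $H$ with $B=\bigcap_\alpha \bar H_\alpha$ and verifying surjectivity of $(\pi_H,\pi_B)$. Once these steps are assembled, $H$ belongs to $\mathcal F^0_x$ and dominates every $H_\alpha$, so Zorn's Lemma yields a maximal element.
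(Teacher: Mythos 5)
Your proposal is correct and follows essentially the same route as the paper's proof: both take the closure of the union of the chain, use the nested orthogonal decompositions $H_\beta = H_\alpha\times C_{\alpha\beta}$ to show the nets of projections onto the factors and cofactors are Cauchy via the monotone quantity $d(p,H_\alpha)$, and pass the product isometry to the limit, with the Hilbert structure of $H$ coming from the union being a pre-Hilbert space. The only difference is that you spell out surjectivity of $(\pi_H,\pi_B)$ explicitly where the paper appeals to continuity of the limit map.
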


\begin{proof}
By Zorn's Lemma we only  need to show that for any increasing chain $I\subset \mathcal F^0_x$, there exists an element $H\in \mathcal F^0_x$ containing all $A\in I$.  We let $H$ be the closure
$$H:=\overline {\bigcup _{A\in I} A}\;.$$
The union of a chain of Hilbert spaces is a pre-Hilbert space. Since $X$ is complete,  the closure $H$ is a Hilbert space.  Clearly, any $A\in I$ is a subset of $H$.  It remains to prove that $H$ is a factor subset of $X$.

For any $p\in X$ and any $A^{\alpha} \in I$ with cofactor $B^{\alpha}$,  we set 
$$p^{\alpha}:=\pi _{A^{\alpha}} (p) \in A^{\alpha} \;\; \text{ and} \; \; p_{\alpha}:=\pi _{B^{\alpha}} (p) \in B^{\alpha}$$

For any  $A^{\alpha} \subset A^{\beta}$ in $I$, $A^{\alpha}$ is a Hilbert
 subspace of the Hilbert space $A^{\beta}$ hence a factor subset of $A^{\beta}$. Denote by 
$A^{\alpha, \beta}$ the orthogonal complement of $A^{\alpha }$ in $A^{\beta}$. Thus, 
$A^{\beta} = A^{\alpha} \times A^{\alpha,\beta}\,.$ 
Then, for all $p\in X$,
$$\pi _{A^{\alpha}} (p^{\beta} )=p^{\alpha}\; \; \text{and} \; \;
d^2(p,p^{\beta})+ d^2(p^{\alpha}, p^{\beta})= d^2(p, p^{\alpha}) \,.$$
We deduce that the net  of real numbers  $d(p,p^{\alpha})$ is monotonically decreasing, hence convergent.
Moreover, the net of points $p^{\alpha}$ in $X$ is Cauchy, hence convergent by the completeness of $X$.  The limit point $p^{\infty}$ is  contained in $H$ and it is the closest-point to $p$  on $H$.

Denote by $ B^{\alpha}, B^{\beta} \in \mathcal F_x$ the cofactors of $A^{\alpha}$ and $A^{\beta}$, respectively.  Then we have a canonical isomorphism 
$$ B^{\alpha}=A^{\alpha,\beta}\times B^{\beta }\,.$$  
As above, for any $p\in X$ the net of numbers  $d(p,p_{\alpha})$ is monotonically increasing.
Since all these numbers are bounded by $d(p,x)$, this net of numbers is converging. Then,
by completeness of $X$, the net of points $p_{\alpha}$ is convergent to a point $p_{\infty}\in X$, contained in 
$$B^{\infty} :=\bigcap _{A^{\alpha}\in I} B^{\alpha}\;.$$

Hence, the isometric embeddings 
$$(\pi _{A^{\alpha}}, \pi _{B^{\alpha}}):X\to X\times X$$
converge pointwise to a map
$$(\pi_H, \pi_{B^{\infty}}):X\to X\times X\;.$$
By continuity, this map is again an isometric embedding. Moreover, by  continuity the image
coincides with $H\times B^{\infty} \subset X\times X$.  This proves that $H$ is a factor subset and finishes the proof. 
\end{proof}

\section{End of Proof}
Now we can easily provide

\begin{proof}[Proof of Theorem \ref{thm: main}]
Fix a point $x\in X$.  By Lemma  \ref{lem: zorn}, there exists a maximal factor subset $H_x=H\in \mathcal F^0_x$ isometric to a Hilbert space.  Let $Y\in \mathcal F_x$ denote the cofactor subset, thus $X=Y\times H$. If $Y$ contained a splitting line $\ell$ then $Y=Z\times \ell$ and $X=Z\times \ell \times H$.  Then $(\ell \times H)_x$ is a factor subset in $\mathcal F_x^0$, which contains $H$ and is not equal to $H$ in contradiction to the maximality of $H$.   Thus, $Y$ contains no splitting lines.

We claim that $H$ is the union of all splitting lines in $X$ through the point $x$. This  immediately implies the uniqueness statement.
	
Any line $\ell$ in the Hilbert space $H$ through the point $x$ is a splitting line in $H$. Since $H$ is a factor subset of $X$, 
the line $\ell$ is a splitting line in $X$. On the other hand, let  $\ell \in \mathcal F_x$ be a splitting line  in $X$ and assume 	 that  $\ell$ is not contained  in  $H$.   Then $\pi_{Y} (\ell)$ is not a point, hence, by Proposition \ref{prop: mainprop}, this projection is a splitting line in $Y$. This  contradiction  finishes the proof of the claim and of the theorem.
%
\end{proof}






\bibliographystyle{alpha}
\bibliography{Hilbert}

\end{document}